\title{Automorphism groups of smooth plane curves with many Galois points}
\author{Satoru Fukasawa}
\subjclass[2000]{14H50, 12F10, 14H05}
\keywords{smooth plane curve, automorphism group, positive characteristic, ordinary curve}
\address{Department of Mathematical Sciences,  
Faculty of Science, Yamagata University, 
Kojirakawa-machi 1-4-12, Yamagata 990-8560, Japan.}
\email{s.fukasawa@sci.kj.yamagata-u.ac.jp} 
\newtheorem{theorem}{Theorem}
\newtheorem{proposition}{Proposition}
\newtheorem{lemma}{Lemma}
\theoremstyle{definition} 
\newtheorem{remark}{Remark}
\begin{document}
\begin{abstract} 
We settle the automorphism groups of curves appearing in a classification list of smooth plane curves with at least two Galois points. 
One of them is an ordinary curve whose automorphism group exceeds the Hurwitz bound. 
\end{abstract}
\maketitle
\section{Introduction}  
Let the base field $K$ be an algebraically closed field of characteristic $p=2$ and let $q=2^e \ge 4$. 
We consider smooth plane curves given by
\begin{equation} \label{d-Galois}
Z\prod_{\alpha \in \mathbb F_{q}}(X+\alpha Y+\alpha^2 Z)+\lambda Y^{q+1}=0, \tag{*}
\end{equation}  
and 
\begin{equation} \label{3-Galois}
(X^2+XZ)^2+(X^2+XZ)(Y^2+YZ)+(Y^2+YZ)^2+\lambda Z^4=0 , \tag{**}
\end{equation} 
where $\lambda \in K \setminus \{0, 1\}$. 
These curves appear in the classification list of smooth plane curves with at least two Galois points (\cite[Theorem 3]{fukasawa2}, see \cite{miura-yoshihara, yoshihara} for definition of Galois point). 
The automorphism groups of other curves (Fermat, Klein quartic and the curve $x^3+y^4+1=0$) in the list were studied by many authors (see, for example, \cite{hkt, hurt, ks, ritzenthaler}). 
In this paper, we settle the automorphism groups of these curves, as follows. 

\begin{theorem} \label{inner}
Let $C$ be the plane curve given by $(\ref{d-Galois})$ of degree $q+1$ and genus $g_C=q(q-1)/2$. 
Then, ${\rm Aut}(C) \cong {\rm PGL}(2, \Bbb F_q)$. 
In particular, $|{\rm Aut}(C)|=q^3-q$ and $> 84(g_C-1)$ if $q \ge 64$.   
\end{theorem}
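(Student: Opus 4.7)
The strategy is to establish both inclusions $\mathrm{PGL}(2,\mathbb F_q)\hookrightarrow \mathrm{Aut}(C)$ and $|\mathrm{Aut}(C)|\le q^{3}-q$, using the two Galois points supplied by \cite{fukasawa2} to produce the required automorphisms and a direct linear analysis in $\mathrm{PGL}(3,K)$ to rule out any others.

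For the lower bound I would first extract automorphisms directly from the shape of (*). The point $P_{1}=[1:0:0]$ lies on $C$ and is an inner Galois point. Using the characteristic-$2$ identity $(\alpha+\beta)^{2}=\alpha^{2}+\beta^{2}$, one checks that
\[
\sigma_{\beta}\colon [X:Y:Z]\longmapsto [X+\beta Y+\beta^{2}Z:Y:Z], \qquad \beta\in\mathbb F_q,
\]
permutes the linear factors $X+\alpha Y+\alpha^{2}Z$ and fixes $\lambda Y^{q+1}$, so it preserves $C$; these maps form a subgroup $H_{1}\cong(\mathbb F_q,+)$ of order $q$, namely the Galois group at $P_{1}$. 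Similarly, because $\mu^{q+1}=\mu^{2}$ on $\mathbb F_q^{*}$ and $\{\mu\alpha:\alpha\in\mathbb F_q\}=\mathbb F_q$, the torus
\[
\tau_{\mu}\colon [X:Y:Z]\longmapsto [X:\mu Y:\mu^{2}Z], \qquad \mu\in\mathbb F_q^{*},
\]
also preserves (*). Together $H_{1}$ and $\{\tau_{\mu}\}$ generate a Borel-type subgroup $B\cong \mathbb F_q\rtimes \mathbb F_q^{*}$ of order $q(q-1)$ normalizing $H_{1}$.

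To promote $B$ to $\mathrm{PGL}(2,\mathbb F_q)$, I would invoke the second Galois point $P_{2}$ guaranteed by \cite{fukasawa2}: its Galois group $H_{2}$ is another subgroup of order $q$ distinct from $H_{1}$. A standard fact is that $\mathrm{PGL}(2,\mathbb F_q)$ is generated by the stabilizer of one point together with any Sylow $2$-subgroup not contained in it, since $B$ is maximal; consequently $\langle B,H_{2}\rangle\cong \mathrm{PGL}(2,\mathbb F_q)$, which supplies the inclusion $\mathrm{PGL}(2,\mathbb F_q)\hookrightarrow \mathrm{Aut}(C)$ and hence the bound $|\mathrm{Aut}(C)|\ge q^{3}-q$.

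For the upper bound I would use the classical theorem of Chang that every automorphism of a smooth plane curve of degree $\ge 4$ is the restriction of an element of $\mathrm{PGL}(3,K)$, so $\mathrm{Aut}(C)\subseteq \mathrm{PGL}(3,K)$. The plan is to write a generic element $\sigma\in \mathrm{PGL}(3,K)$, impose that $\sigma^{*}F$ is a scalar multiple of $F=Z\prod_{\alpha}(X+\alpha Y+\alpha^{2}Z)+\lambda Y^{q+1}$, and match coefficients. An equivalent reduction is to show that $\sigma$ must permute the finite set $\Delta$ of Galois points of $C$, so that $|\mathrm{Aut}(C)|=|\Delta|\cdot |\mathrm{Stab}(P_{1})|$; one then identifies $\mathrm{Stab}(P_{1})$ with the Borel $B$ above by a local linear computation at $P_{1}$, and $|\Delta|=q+1$ by showing that $\mathrm{PGL}(2,\mathbb F_q)$ is already transitive on $\Delta$. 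The main obstacle is precisely this final step: because the Hurwitz bound $84(g_{C}-1)$ is exceeded once $q\ge 64$, no generic genus-theoretic inequality is available, and one must instead exploit the very rigid characteristic-$2$ shape of (*) together with the Galois point structure to exclude any extra projective symmetries.
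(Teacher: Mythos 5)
Your explicit automorphisms are correct: $\sigma_\beta$ permutes the factors $X+\alpha Y+\alpha^2Z$ and $\tau_\mu$ rescales $F$ by $\mu^2$ (using $\mu^{q+1}=\mu^2$), so you do obtain a subgroup $B$ of order $q(q-1)$ inside ${\rm Aut}(C)$, and together with the Galois group at a second Galois point this can be pushed to the lower bound. However, there are two genuine gaps. First, the generation claim $\langle B,H_2\rangle\cong{\rm PGL}(2,\mathbb F_q)$ is obtained by applying the maximality of the Borel subgroup of ${\rm PGL}(2,\mathbb F_q)$ to subgroups of ${\rm PGL}(3,K)$, where that fact does not directly apply: a priori $\langle B,H_2\rangle$ need not sit inside any copy of ${\rm PGL}(2,\mathbb F_q)$. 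The argument must be run on the images of $B$ and $H_2$ under the restriction map $r\colon\gamma\mapsto\gamma|_{L_Y}$ to the line $L_Y$ containing all $q+1$ Galois points (this map is well defined because automorphisms permute Galois points); there the Borel-maximality argument does give that $r$ is surjective, which suffices for $|{\rm Aut}(C)|\ge q^3-q$, but not yet for the isomorphism.

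Second, and more seriously, the upper bound $|{\rm Aut}(C)|\le q^3-q$ is only sketched as a ``plan,'' and you yourself flag the final step as the main obstacle; coefficient-matching in ${\rm PGL}(3,K)$ against a form of degree $q+1$ is not feasible, and the orbit--stabilizer route still requires bounding ${\rm Stab}(P_1)$, which is not done. The missing idea is that the restriction map $r$ above is \emph{injective}: if $\gamma\in{\rm Aut}(C)\subset{\rm PGL}(3,K)$ is the identity on $L_Y$, then $\gamma$ fixes each Galois point $P_i$ and hence each tangent line $T_{P_i}C$ (explicitly $X+\sqrt\beta\,Y+\beta Z=0$ for $P_i=(\beta:0:1)$), so it fixes the $q\ge 4$ intersection points $T_{P_1}C\cap T_{P_i}C$ lying on $T_{P_1}C$; being a projectivity of $T_{P_1}C\cong\mathbb P^1$ with at least three fixed points, $\gamma$ is the identity on $T_{P_1}C$, and being the identity on the two distinct lines $L_Y$ and $T_{P_1}C$ it is the identity on $\mathbb P^2$. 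This single lemma yields ${\rm Aut}(C)\hookrightarrow{\rm PGL}(2,\mathbb F_q)$, making the upper bound immediate and, combined with surjectivity of $r$, the isomorphism. Without it (or an equivalent rigidity statement), your proof does not close.
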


\begin{theorem} \label{outer}
Let $C$ be the plane curve given by $(\ref{3-Galois})$ of degree four. 
Then, ${\rm Aut}(C)$ is isomorphic to the symmetric group $S_4$ of degree four. 
In particular, $|{\rm Aut}(C)|=24$. 
\end{theorem}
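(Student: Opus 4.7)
Because $C$ is smooth of degree $4 \geq 4$, every automorphism is induced by a linear projective transformation, so $\mathrm{Aut}(C) \subseteq \mathrm{PGL}(3,K)$. The plan is to exhibit an explicit subgroup isomorphic to $S_4$, locate three outer Galois points that $\mathrm{Aut}(C)$ must permute, and then determine the pointwise stabilizer of these three points by a direct computation; the upper and lower bounds together will then force $\mathrm{Aut}(C) \cong S_4$.

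Write $A = X^2+XZ$ and $B = Y^2+YZ$, so the equation reads $A^2+AB+B^2+\lambda Z^4 = 0$. The identity $(X+\alpha Z)^2+(X+\alpha Z)Z = A$, valid for $\alpha \in \mathbb{F}_2$, shows that the four translations $(X:Y:Z) \mapsto (X+\alpha Z:Y+\beta Z:Z)$ with $\alpha,\beta \in \mathbb{F}_2$ lie in $\mathrm{Aut}(C)$ and form a Klein four-group $V$. Since $A^2+AB+B^2$ is the $\mathbb{F}_4/\mathbb{F}_2$ norm form in $(A,B)$, the substitutions $(X:Y:Z) \mapsto (Y:X:Z)$ and $(X:Y:Z) \mapsto (Y:X+Y:Z)$ also preserve the equation and generate a copy of $S_3 \cong \mathrm{GL}_2(\mathbb{F}_2)$; together with $V$ this produces $V \rtimes S_3 \cong S_4 \subseteq \mathrm{Aut}(C)$.

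Next I locate the outer Galois points. For $P_1 = [1:0:0]$ the projection $(X:Y:Z) \mapsto (Y:Z)$ has fibers described by a quadratic in $U = X^2+XZ$, and a direct check shows that the four transformations $X \mapsto X+\alpha Y+\beta Z$ with $\alpha,\beta \in \mathbb{F}_2$ preserve the equation and act faithfully on the generic fiber; hence $P_1$ is an outer Galois point whose Galois group has order $4$. The $S_3$ above carries $P_1$ to $P_2 = [0:1:0]$ and to $P_3 = [1:1:0]$, and by \cite{fukasawa2} these are the three outer Galois points of $C$.

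Since any element of $\mathrm{Aut}(C)$ permutes the set of outer Galois points, there is a natural homomorphism $\mathrm{Aut}(C) \to \mathrm{Sym}\{P_1,P_2,P_3\} \cong S_3$. A linear transformation fixing $P_1, P_2, P_3$ pointwise must have the form $(X:Y:Z) \mapsto (X+aZ:Y+bZ:cZ)$, and substituting into the defining equation and matching coefficients monomial by monomial will force $c=1$ and $a,b \in \mathbb{F}_2$, so the kernel of this homomorphism equals $V$. Hence $|\mathrm{Aut}(C)| \leq 4 \cdot 6 = 24$, which combined with $S_4 \subseteq \mathrm{Aut}(C)$ yields $\mathrm{Aut}(C) \cong S_4$. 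The principal obstacle is the coefficient-matching in this last step, a routine but term-heavy computation best organized by grouping the monomials of the transformed polynomial by their total degree in $Z$.
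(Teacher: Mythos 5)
Your proof is correct, and it reaches the conclusion by a genuinely different route from the paper's. For the lower bound, the paper assembles elements of the Galois groups $G_{P_i}$ to show that the restriction map $r:{\rm Aut}(C)\to {\rm PGL}(L_Z(\Bbb F_2))\cong S_3$ is surjective with kernel $\langle\sigma_1,\sigma_2\rangle$ of order $4$, whereas you write down an explicit subgroup $V\rtimes S_3\cong {\rm AGL}(2,\Bbb F_2)\cong S_4$; this is more direct and hands you the isomorphism type for free once the order is pinned down. For the upper bound, the paper lets ${\rm Aut}(C)$ act faithfully on the four points $Q_1,\dots,Q_4$ of Proposition \ref{fundamental2}(d) --- which are exactly the four affine $\Bbb F_2$-points on which your $V\rtimes S_3$ acts as the full affine group --- giving an injection into $S_4$; you instead act on the three outer Galois points and compute the kernel of ${\rm Aut}(C)\to S_3$ by coefficient comparison, obtaining $|{\rm Aut}(C)|\le 4\cdot 6=24$. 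Both arguments import the same external fact, namely that $\{P_1,P_2,P_3\}$ is the \emph{complete} set of outer Galois points; the paper cites \cite{fukasawa3} for this, so you should correct your reference from \cite{fukasawa2}. The one step you defer, the coefficient matching, does check out: a linear map fixing the three $P_i$ has the form $(X:Y:Z)\mapsto(X+aZ:Y+bZ:cZ)$, the coefficient of $X^2YZ$ forces $c=1$, and the coefficients of $X^2Z^2$ and $Y^2Z^2$ then force $b^2+b=a^2+a=0$, so the kernel is exactly $V$. The paper's route has the advantage of exhibiting ${\rm Aut}(C)$ as generated by the Galois groups, which is the theme of the paper and is recorded in its final remark; yours is shorter and more self-contained.
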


It is well known that the order of the automorphism group of any curve with genus $g_C>1$ is bounded by $84(g_C-1)$ in characteristic zero, by Hurwitz. 
Our curve given by $(\ref{d-Galois})$ is an ordinary curve whose automorphism group exceeds the Hurwitz bound (see Remark \ref{ordinary}). 
This is different from examples of Subrao \cite{subrao} and Nakajima \cite{nakajima} by the genera. 

Our theorems are proved by considering the Galois groups at Galois points. 
Therefore, our study is related to the results of Kanazawa, Takahashi and Yoshihara \cite{kty}, Miura and Ohbuchi \cite{mo}. 

\section{Proof of Theorem \ref{inner}} 
According to \cite[Appendix A, 17 and 18]{acgh} or \cite{chang}, any automorphism of smooth plane curves of degree at least four is the restriction of a linear transformation. 
Therefore, we have an injection $$ {\rm Aut}(C) \hookrightarrow {\rm PGL}(3, K). $$ 
Let $L_Y$ be the line given by $Y=0$, and let $P_1=(1:0:0)$ and $P_2=(0:0:1)$. 
A point $P \in \Bbb P^2$ is said to be Galois, if the field extension induced by the projection $\pi_P$ from $P$ is Galois. 
If $P$ is a Galois point, then we denote by $G_P$ the Galois group. 
For $\gamma \in {\rm Aut}(C)$, we denote the set $\{Q \in \Bbb P^2 \ | \ \gamma(Q)=Q\}$ by $L_{\gamma}$.  
We have the following properties for curves with $(\ref{d-Galois})$ (see also \cite{fukasawa2}). 

\begin{proposition} \label{fundamental} 
Let $C$ be the plane curve given by $(\ref{d-Galois})$.  
Then, we have the following. 
\begin{itemize}
\item[(a)] $C \cap L_Y=L_Y(\Bbb F_q)$, where $L_Y(\Bbb F_q)$ is the set of $\Bbb F_q$-rational points of $L_Y$. 
We denote by $L_Y(\Bbb F_q)=\{P_1, \ldots, P_{q+1}\}$. 
\item[(b)] The set of Galois points on $C$ coincides with $L_Y(\Bbb F_q)$. 
\item[(c)] For the projection $\pi_{P_1}$ from $P_1$, the ramification index at $P_1$ is $q$ and there are exactly $(q-1)$ lines $\ell$ such that the ramification index at each point of $C \cap \ell$ is equal to two. 
Furthermore, $\sigma(P_1)=P_1$ for any $\sigma \in G_{P_1}$. 
\item[(d)] If $i, j, k$ are different, then there exists $\sigma \in G_{P_i}$ such that $\sigma(P_j)=P_k$. 
\end{itemize}
\end{proposition}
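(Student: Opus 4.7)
The plan is to extract everything from one symmetry of the defining polynomial: reindexing the product by $\alpha \mapsto \alpha+\beta$ for $\beta \in \mathbb{F}_q$ yields the identity
\[\prod_{\alpha \in \mathbb{F}_q}(X+\alpha Y+\alpha^2 Z)=\prod_{\alpha\in\mathbb{F}_q}\bigl((X+\beta Y+\beta^2 Z)+\alpha Y+\alpha^2 Z\bigr).\]
Part (a) is then immediate by setting $Y=0$ in $(\ref{d-Galois})$: the equation reduces to $Z\prod_\alpha(X+\alpha^2 Z)=0$, and since $\alpha\mapsto\alpha^2$ is a bijection of $\mathbb{F}_q$ in characteristic two this equals $Z\prod_{\beta\in\mathbb{F}_q}(X+\beta Z)$, whose intersection with $L_Y$ is exactly $\{(1{:}0{:}0)\}\cup\{(\beta{:}0{:}1):\beta\in\mathbb{F}_q\}=L_Y(\mathbb{F}_q)$.

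For (c), the identity above shows that $\sigma_\beta:(X{:}Y{:}Z)\mapsto(X+\beta Y+\beta^2 Z:Y:Z)$ preserves $(\ref{d-Galois})$ for every $\beta\in\mathbb{F}_q$. These $q$ maps form a subgroup of ${\rm Aut}(C)$, each commutes with the projection $\pi_{P_1}:(X{:}Y{:}Z)\mapsto(Y{:}Z)$, and each fixes $P_1$; since $\deg\pi_{P_1}=q$, this subgroup must equal $G_{P_1}$, and in particular $P_1$ is Galois. The ramification at $P_1$ is total of index $q$ because substituting $Z=0$ into $(\ref{d-Galois})$ gives $\lambda Y^{q+1}=0$, so the line $Z=0$ is tangent to $C$ at $P_1$ with contact multiplicity $q+1$. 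For the remaining ramification, each nontrivial $\sigma_\beta$ (with $\beta\neq 0$) acts as the identity exactly on the line $\ell_\beta:Y+\beta Z=0$ through $P_1$, and two distinct such lines meet only at $P_1$. Therefore, for any $Q\in C\cap\ell_\beta\setminus\{P_1\}$ the stabilizer in $G_{P_1}$ is exactly $\langle\sigma_\beta\rangle$ of order two, giving ramification index two at $Q$; conversely, any point of $C\setminus\{P_1\}$ with nontrivial stabilizer must lie on some $\ell_\beta$. Hence the $q-1$ lines $\ell_\beta$ with $\beta\in\mathbb{F}_q^*$ are precisely the nontangent lines contributing ramification to $\pi_{P_1}$.

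Parts (d) and (b) then follow quickly. For (d), the line $L_Y=\ell_0$ is fixed pointwise by no nontrivial $\sigma_\beta$, so its fiber under $\pi_{P_1}$ is unramified and by (a) equals $\{P_2,\ldots,P_{q+1}\}$; Galois transitivity on this fiber sends any $P_j$ to any $P_k$ for $j,k\neq 1$, and the same argument applied at each $P_i$ yields (d) in general. For (b), one inclusion follows from (c) applied at each $P_i$ together with (d); for the reverse I would cite the classification in \cite{fukasawa2}, which already places the Galois points of any curve of type $(\ref{d-Galois})$ inside $L_Y(\mathbb{F}_q)$. The main obstacle is precisely this uniqueness direction of (b): there is no short direct computation ruling out Galois points off $L_Y$, so the external classification is needed. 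Everything else reduces, once $\{\sigma_\beta\}_{\beta\in\mathbb{F}_q}$ is identified, to elementary linear algebra over $\mathbb{F}_q$.
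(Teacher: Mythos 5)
Your treatment of (a) and (c) is sound and in fact more self-contained than the paper's: the paper simply cites \cite[Section 4]{fukasawa2} for the fact that $G_{P_1}$ consists of the maps $(x,y)\mapsto(x+\alpha y+\alpha^2,y)$, whereas you derive this from the reindexing identity $\alpha\mapsto\alpha+\beta$, verify that the $q$ elations $\sigma_\beta$ commute with $\pi_{P_1}$, and conclude Galoisness from $\deg\pi_{P_1}=q$. Your computation of the ramification via the fixed lines $L_{\sigma_\beta}\colon Y+\beta Z=0$ is exactly the paper's mechanism (the paper points to \cite[III.8.2]{stichtenoth} at the same step), and your proof of (d) by transitivity of the Galois group on the fibre $C\cap L_Y\setminus\{P_i\}$ is the paper's proof verbatim.

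The genuine problem is a circularity between (b) and (d). Your statement of (d) "in general" requires $\pi_{P_i}$ to be Galois for every $i$, i.e.\ it requires the inclusion $L_Y(\Bbb F_q)\subseteq\{\text{Galois points}\}$, which is half of (b). But you then claim that this very inclusion "follows from (c) applied at each $P_i$ together with (d)." Your proof of (c) is specific to $P_1$: the explicit subgroup $\{\sigma_\beta\}$ consists of elations with centre $P_1$ and axis through $P_1$, and nothing in your argument produces an analogous order-$q$ subgroup of ${\rm Aut}(C)$ fixing the fibres of $\pi_{P_i}$ for $i\ge 2$ (such a group exists, but exhibiting it is a separate computation, not a formal consequence of the $P_1$ case). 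So as written, (d) presupposes the unproved half of (b), and (b) is deduced from (d). The paper avoids this by citing \cite[Section 3]{fukasawa1} and \cite[Section 4]{fukasawa2} for \emph{both} inclusions of (b) — the existence of all $q+1$ Galois groups as well as the absence of further Galois points — and only then invoking \cite[III.7.1]{stichtenoth} for (d). The fix for your write-up is either to cite the reference for the full statement of (b) before proving (d), or to exhibit $G_{P_i}$ explicitly for each $i$; the rest of your argument then stands.
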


\begin{proof}
Since the set $C \cap L_Y$ is given by $Y=Z\prod_{\alpha \in \Bbb F_q}(X+\alpha^2Z)=0$, we have (a). 
See \cite[Section 3]{fukasawa1}, \cite[Section 4]{fukasawa2} for (b). 
An automorphism $\sigma \in G_{P_1}$ is given by $(x,y) \mapsto (x+\alpha y+\alpha^2, y)$ for some $\alpha \in \Bbb F_q$ (see \cite[Section 4]{fukasawa2}). 
Then, the set $L_{\sigma}$ coincides with the line defined by $\alpha Y+\alpha^2 Z=0$. 
It follows from \cite[III.8.2]{stichtenoth} that we have (c). 
Since $G_{P_i}$ acts on $C \cap \ell \setminus \{P_i\}$ transitively if $\ell$ is a line passing through $P_i$ by a natural property of Galois extension (\cite[III.7.1]{stichtenoth}), we have (d).  
\end{proof}

We determine ${\rm Aut}(C)$. 

\begin{lemma} \label{injective} 
The restriction map $\gamma \mapsto \gamma|_{L_Y}$ gives an injection 
$$ r:{\rm Aut}(C) \hookrightarrow {\rm PGL}(L_Y(\Bbb F_q)) \cong {\rm PGL}(2, \Bbb F_q). $$
\end{lemma}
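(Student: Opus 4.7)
The plan is first to show that every $\gamma\in{\rm Aut}(C)$ preserves $L_Y$ and that $\gamma|_{L_Y}$ lies in ${\rm PGL}(L_Y(\mathbb{F}_q))$, and then to deduce injectivity by analyzing those transformations that fix $L_Y$ pointwise.

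For the first part, Proposition~\ref{fundamental}(b) identifies the set of Galois points on $C$ with $L_Y(\mathbb{F}_q)=\{P_1,\ldots,P_{q+1}\}$. Being a Galois point is intrinsic to $C$, so this set is permuted by any $\gamma\in{\rm Aut}(C)$. Since the $q+1\ge 5$ points $P_i$ are collinear, they determine $L_Y$ uniquely, forcing $\gamma(L_Y)=L_Y$. The restriction $\gamma|_{L_Y}$ is then a projectivity of $L_Y\cong\mathbb{P}^1$ permuting $L_Y(\mathbb{F}_q)\cong\mathbb{P}^1(\mathbb{F}_q)$, and since any such projectivity is determined by its action on three points, and here three $\mathbb{F}_q$-rational points are sent to $\mathbb{F}_q$-rational points, $\gamma|_{L_Y}$ must lie in ${\rm PGL}(L_Y(\mathbb{F}_q))\cong{\rm PGL}(2,\mathbb{F}_q)$.

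For injectivity, I would assume $\gamma|_{L_Y}={\rm id}$, so that $\gamma$ fixes $L_Y$ pointwise. After an appropriate normalization, the matrix of $\gamma$ takes the form $(X:Y:Z)\mapsto(X+aY:bY:cY+Z)$ with $b\ne 0$. A short gradient computation at $P_1=(1:0:0)$ shows that the tangent line to $C$ at $P_1$ is $\{Z=0\}$, and its preservation by $\gamma$ immediately forces $c=0$. Substituting the remaining transformation into (\ref{d-Galois}) and separating the terms divisible by $Z$ from those that are not yields $b^{q+1}=1$ together with the identity
\[
\prod_{\alpha\in\mathbb{F}_q}\bigl(X+\alpha^2 Z+(a+b\alpha)Y\bigr)=\prod_{\alpha\in\mathbb{F}_q}\bigl(X+\alpha Y+\alpha^2 Z\bigr).
\]
Both sides consist of $q$ pairwise distinct linear forms with unit $X$-coefficient, so by unique factorization in $K[X,Y,Z]$ there is a permutation $\phi$ of $\mathbb{F}_q$ with $\alpha^2=\phi(\alpha)^2$ and $a+b\alpha=\phi(\alpha)$ for every $\alpha$. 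In characteristic two the first equality forces $\phi(\alpha)=\alpha$, so $a+b\alpha=\alpha$ for all $\alpha\in\mathbb{F}_q$, and hence $a=0$ and $b=1$.

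I expect the only genuinely delicate step to be this last factor-matching: injectivity of the Frobenius $\alpha\mapsto\alpha^2$ on $\mathbb{F}_q$ is exactly what pins down $\phi$ and rigidifies the family of linear forms. The earlier reductions are essentially bookkeeping built on Proposition~\ref{fundamental}.
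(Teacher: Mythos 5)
Your proof is correct. The well-definedness half coincides with the paper's (Galois points are intrinsic, hence the collinear set $L_Y(\mathbb F_q)$, and therefore $L_Y$ itself, is preserved), but for injectivity you take a genuinely different, more computational route. The paper stays synthetic: assuming $\gamma|_{L_Y}=1$, it observes that $\gamma$ preserves every tangent line $T_{P_i}C$, computes $T_{P_i}C:\ X+\sqrt{\beta}Y+\beta Z=0$ for $P_i=(\beta:0:1)$, and notes that the $q\ (\ge 4)$ intersection points $T_{P_1}C\cap T_{P_i}C$ are distinct fixed points on the line $T_{P_1}C\cong\mathbb P^1$, so $\gamma$ fixes a second line pointwise and must be the identity of $\mathbb P^2$. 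You instead write down the explicit normal form of a projectivity fixing $L_Y$ pointwise, use only the single tangent line $T_{P_1}C=\{Z=0\}$ to kill the parameter $c$, and then substitute into the defining equation, concluding by unique factorization and injectivity of Frobenius that $a=0$ and $b=1$. Your computations check out: $(X:Y:Z)\mapsto(X+aY:bY:cY+Z)$ is indeed the general projectivity fixing $\{Y=0\}$ pointwise, the tangent at $(1:0:0)$ is $\{Z=0\}$, both products are monic in $X$ so the scalar is $1$ and $b^{q+1}=1$, and matching factors forces $\phi=\mathrm{id}$ in characteristic two. The paper's argument buys brevity and never returns to the equation, at the cost of invoking all $q+1$ tangent lines and the hypothesis $q\ge 4$ at that step; yours is longer but self-contained, uses only one tangent line, and makes transparent exactly where characteristic two enters.
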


\begin{proof}
Let $\gamma \in {\rm Aut}(C)$. 
Since the set of Galois points is invariant under the linear transformation, $\gamma (L_Y(\Bbb F_q))=L_Y(\Bbb F_q)$, by Proposition \ref{fundamental}(a)(b). 
Therefore, $r$ is well-defined. 

Assume that $\gamma|_{L_Y}$ is identity. 
Then, $\gamma(T_{P_i}C)=T_{\gamma(P_i)}C=T_{P_i}C$ and the point given by $T_{P_1}C \cap T_{P_i}C$ is fixed by $\gamma$ for any $i$.  
If $P_i=(\beta:0:1) \in L_Y(\Bbb F_q)$, then $T_{P_i}C$ is given by $X+\sqrt{\beta} Y+\beta Z=0$. 
Since $\gamma|_{T_{P_1}C}$ is an automorphism of $T_{P_1}C \cong \Bbb P^1$ and there are $q$ $(\ge 4)$ points fixed by $\gamma$, $\gamma|_{T_{P_1}C}$ is identity. 
Since $\gamma|_{L_Y}=1$ and $\gamma|_{T_{P_1}C}=1$, $\gamma$ is identity on $\Bbb P^2$. 
\end{proof}

\begin{lemma} Let $H(C):=\{\gamma \in {\rm Aut}(C)|\gamma(P_1)=P_1, \gamma(P_2)=P_2\}$ and let $H_0:=\{\tau \in {\rm PGL}(L_Y(\Bbb F_q))|\tau(P_1)=P_1, \tau(P_2)=P_2\}$. 
Then, $r(H(C))=H_0$. 
In particular, $H_0 \subset r({\rm Aut}(C))$. 
\end{lemma}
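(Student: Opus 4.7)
The inclusion $r(H(C)) \subseteq H_0$ is immediate from the definitions together with Lemma \ref{injective}: if $\gamma \in H(C)$ then $r(\gamma) = \gamma|_{L_Y}$ lies in ${\rm PGL}(L_Y(\Bbb F_q))$ and fixes $P_1, P_2$, so $r(\gamma) \in H_0$. The substance of the lemma lies in the reverse inclusion, and my plan is to exhibit enough automorphisms of $C$ explicitly. I would first observe that $H_0$ is the pointwise stabilizer of two distinct points inside ${\rm PGL}(L_Y(\Bbb F_q)) \cong {\rm PGL}(2, \Bbb F_q)$, hence a split torus isomorphic to $\Bbb F_q^*$ of order $q - 1$. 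It therefore suffices to produce $q - 1$ distinct elements of $H(C)$ whose restrictions to $L_Y$ already exhaust $H_0$.

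The natural candidates that fix both $P_1 = (1:0:0)$ and $P_2 = (0:0:1)$ are the diagonal transformations $(X:Y:Z) \mapsto (aX : bY : Z)$ with $a, b \in K^*$. Substituting such a map into (\ref{d-Galois}) and reindexing the product over $\Bbb F_q$ by $\alpha = b\beta/a$, I expect the pulled-back polynomial to be a scalar multiple of the original precisely when the rescaling of $Z\prod_\alpha(X + \alpha Y + \alpha^2 Z)$ matches that of $\lambda Y^{q+1}$. A short computation should pin down the correct family as $\phi_t : (X:Y:Z) \mapsto (t^2 X : tY : Z)$ for $t \in \Bbb F_q^*$, with the compatibility condition reducing to the identity $t^{q-1} = 1$ (valid since $t \in \Bbb F_q^*$). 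Each $\phi_t$ visibly fixes $P_1$ and $P_2$, and distinct values of $t$ give distinct projective maps, so this yields $q-1$ elements of $H(C)$.

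To finish, the restriction $r(\phi_t)$ is the map $(X:0:Z) \mapsto (t^2 X : 0 : Z)$ on $L_Y$, and since $t \mapsto t^2$ is a bijection on $\Bbb F_q^*$ (the order $q - 1$ is odd in characteristic two), the restrictions $r(\phi_t)$ cover every element of $H_0$ as $t$ varies. I do not foresee a serious obstacle; the argument hinges on spotting the correct diagonal ansatz, after which everything reduces to routine substitution and the bijectivity of Frobenius on $\Bbb F_q^*$.
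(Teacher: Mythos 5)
Your argument is correct, and the computation you anticipate does go through: substituting $(X,Y,Z)\mapsto(t^2X,tY,Z)$ into $(\ref{d-Galois})$ sends each factor $X+\alpha Y+\alpha^2Z$ to $t^2\bigl(X+(\alpha/t)Y+(\alpha/t)^2Z\bigr)$, so after reindexing the first term picks up $t^{2q}$ while the second picks up $t^{q+1}$, and these agree exactly when $t^{q-1}=1$. Since $|H_0|=q-1$ and squaring is bijective on $\Bbb F_q^*$, the restrictions $(X:0:Z)\mapsto(t^2X:0:Z)$ do exhaust $H_0$, which together with the trivial inclusion $r(H(C))\subseteq H_0$ gives the lemma. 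Your route differs from the paper's: the paper does not construct any automorphisms here, but instead cites an earlier result that $H(C)$ is cyclic of order exactly $q-1$, observes that $H_0$ is cyclic of order at most $q-1$, and concludes equality from the inclusion together with the injectivity of $r$ established in Lemma \ref{injective}. Your version is self-contained and makes the torus of diagonal automorphisms visible (which is essentially what underlies the cited result), at the cost of an explicit substitution; the paper's version is shorter but leans entirely on the external reference. One small economy you could note: since $r$ is injective and $r(H(C))\subseteq H_0$ with $|H_0|=q-1$, exhibiting $q-1$ distinct elements of $H(C)$ already forces equality, so the final bijectivity-of-squaring observation, while correct, is not strictly needed.
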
 

\begin{proof}
We have $r(H(C)) \subset H_0$. 
According to \cite[Lemma 4 and Page 100]{fukasawa2}, $H(C)$ is a cyclic group of order $q-1$. 
We can also prove that $H_0$ is a cyclic group of order at most $q-1$ (see, for example, \cite[Lemma 2(2)]{fukasawa2}). 
Therefore, we have $r(H(C))=H_0$. 
\end{proof}

\begin{lemma} \label{surjective}
The restriction map $r$ is surjective. 
\end{lemma}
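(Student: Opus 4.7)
The plan is to show that the image $r(\mathrm{Aut}(C))$ has order at least $|\mathrm{PGL}(2,\mathbb F_q)| = q^3-q$, whence surjectivity follows from the injection in Lemma \ref{injective}. I would compute the order by combining a transitivity statement with a lower bound on the stabilizer of a point.

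First I would determine the stabilizer of $P_1$ in $r(\mathrm{Aut}(C))$. By Proposition \ref{fundamental}(c), every $\sigma\in G_{P_1}$ fixes $P_1$, so $G_{P_1}\subset H'(C)$, where $H'(C)$ denotes the stabilizer of $P_1$ in $\mathrm{Aut}(C)$; moreover $|G_{P_1}|=q$ since $\pi_{P_1}$ has degree $q$ and is Galois. By Proposition \ref{fundamental}(d), $r(G_{P_1})$ acts transitively on $\{P_2,\ldots,P_{q+1}\}$, so $|r(G_{P_1})|=q$. On the other hand, the previous lemma gives $H_0\subset r(\mathrm{Aut}(C))$, and $H_0$ has order $q-1$ and fixes both $P_1$ and $P_2$. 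Since a non-identity element of $r(G_{P_1})$ does not fix $P_2$, the subgroups $r(G_{P_1})$ and $H_0$ of $r(\mathrm{Aut}(C))$ intersect trivially; their product therefore has order $q(q-1)$, which is exactly the order of the full stabilizer of $P_1$ in $\mathrm{PGL}(2,\mathbb F_q)$. Hence the stabilizer of $P_1$ in $r(\mathrm{Aut}(C))$ has order exactly $q(q-1)$.

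Next I would show that $r(\mathrm{Aut}(C))$ acts transitively on $L_Y(\mathbb F_q)=\{P_1,\ldots,P_{q+1}\}$. For any $k\neq 1$, Proposition \ref{fundamental}(d) (applied with $i=2$, $j=1$) produces $\sigma\in G_{P_2}\subset\mathrm{Aut}(C)$ with $\sigma(P_1)=P_k$, so the $\mathrm{Aut}(C)$-orbit of $P_1$ contains every point of $L_Y(\mathbb F_q)$. By the orbit–stabilizer theorem,
\[
|r(\mathrm{Aut}(C))| \;\geq\; (q+1)\cdot q(q-1) \;=\; q^3-q \;=\; |\mathrm{PGL}(2,\mathbb F_q)|,
\]
which forces $r$ to be surjective.

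I do not anticipate a serious obstacle: once the three ingredients (the Galois group at $P_1$, the subgroup $H_0$ from the previous lemma, and the transitivity from Proposition \ref{fundamental}(d)) are in place, the counting argument is immediate. The only point that needs a moment of care is verifying that $r(G_{P_1})\cap H_0=\{1\}$, and that is handled by the fact that $r(G_{P_1})$ acts freely on $\{P_2,\ldots,P_{q+1}\}$ while $H_0$ fixes $P_2$.
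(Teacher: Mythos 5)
Your argument is correct, and it closes the proof by a counting route that differs from the paper's. The paper takes an arbitrary $\tau\in{\rm PGL}(L_Y(\Bbb F_q))$ and explicitly factors it: using Proposition \ref{fundamental}(d) it composes $\tau$ with an element of some $r(G_{P_k})$ to fix $P_1$, then with an element of $r(G_{P_1})$ to fix $P_2$ as well, landing in $H_0\subset r({\rm Aut}(C))$; no order computations are needed. You instead bound $|r({\rm Aut}(C))|$ from below by orbit--stabilizer: the stabilizer of $P_1$ contains the product set $r(G_{P_1})H_0$ of cardinality $q(q-1)$ (your verification that the intersection is trivial, via the free action of $r(G_{P_1})$ on $\{P_2,\dots,P_{q+1}\}$, is sound), the orbit of $P_1$ is everything, and $(q+1)\cdot q(q-1)=|{\rm PGL}(2,\Bbb F_q)|$ forces surjectivity. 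The ingredients are the same (Proposition \ref{fundamental}(c)(d), the previous lemma, injectivity of $r$), but your version additionally uses $|G_{P_1}|=q$ and the order of ${\rm PGL}(2,\Bbb F_q)$, in exchange for which you get the sharp stabilizer computation as a by-product; the paper's version is coordinate-free about orders and slightly more economical.

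One small slip in the transitivity step: for $k=2$ you cannot apply Proposition \ref{fundamental}(d) with $i=2$, $j=1$, since the proposition requires $i,j,k$ distinct, and indeed every element of $G_{P_2}$ fixes $P_2$, so no $\sigma\in G_{P_2}$ sends $P_1$ to $P_2$. To put $P_2$ in the orbit of $P_1$, use instead $\sigma\in G_{P_k}$ for any $k\ge 3$ (take $i=k$, $j=1$). This is a one-line repair and does not affect the rest of the argument.
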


\begin{proof}
Let $\tau \in {\rm PGL}(L_Y(\Bbb F_q))$ and let $\tau(P_1)=P_i$, $\tau(P_2)=P_j$. 
We take $k \ne 1, i$. 
By Proposition \ref{fundamental}(d), there exists $\gamma_1 \in r(G_{P_k})$ such that $\gamma_1\tau(P_1)=P_1$. 
Further, by Proposition \ref{fundamental}(c)(d), there exists $\gamma_2 \in r(G_{P_1})$ such that $\gamma_2\gamma_1\tau(P_1)=P_1$ and $\gamma_2\gamma_1\tau(P_2)=P_2$. 
Then, $\gamma_2\gamma_1\tau \in H_0$. 
By Lemma above, $\gamma_2\gamma_1\tau \in r({\rm Aut}(C))$. 
This implies $\tau \in r({\rm Aut}(C))$. 
\end{proof}

We have ${\rm Aut(C)} \cong {\rm PGL}(2, \Bbb F_q)$ by Lemmas \ref{injective} and \ref{surjective}.

\begin{remark} \label{ordinary}
According to Deuring-$\breve{\mbox{S}}$afarevi$\breve{\mbox{c}}$ formula (\cite{subrao}), the $p$-rank $\gamma_C$ of the curve $C$ is computed by ramification indices for the Galois covering $\pi_{P_1}$.  
Using Proposition \ref{fundamental}(c), we have 
$$ \frac{\gamma_C-1}q=(-1)+\left(1-\frac{1}{q}\right)+(q-1)\left(1-\frac{1}{2}\right).$$
This implies $\gamma_C=q(q-1)/2=g_C$, i.e. $C$ is ordinary. 
\end{remark}

\begin{remark} We also have the following for ${\rm Aut}(C)$. 
\begin{itemize}
\item[(a)] $|{\rm Aut}(C)|=g_C\times(3+\sqrt{8g_C+1})$. 
\item[(b)] ${\rm Aut}(C)=\langle G_{P_1}, \ldots, G_{P_{q+1}}\rangle=\langle G_{P_1}, G_{P_2} \rangle$. 
\end{itemize}  
\end{remark}

\section{Proof of Theorem \ref{outer}} 
Similarly to the previous section, we have an injection $$ {\rm Aut}(C) \hookrightarrow {\rm PGL}(3, K). $$ 
Let $L_Z$ be the line given by $Z=0$, and let $P_1=(1:0:0)$, $P_2=(1:1:0)$ and $P_3=(0:1:0)$. 
If $P$ is a Galois point, then we denote by $G_P$ the Galois group. 
For $\gamma \in {\rm Aut}(C)$, we denote the set $\{Q \in \Bbb P^2 \ | \ \gamma(Q)=Q\}$ by $L_{\gamma}$.  
We have the following properties for curves with $(\ref{3-Galois})$ (see \cite[Sections 3 and 4]{fukasawa3}). 

\begin{proposition} \label{fundamental2} 
Let $C$ be the plane curve given by $(\ref{3-Galois})$.  
Then, we have the following. 
\begin{itemize} 
\item[(a)] The set of Galois points in $\Bbb P^2 \setminus C$ coincides with $L_Z(\Bbb F_2)=\{P_1, P_2, P_3\}$. 
\item[(b)] For each $i$, there exists a unique $\sigma_i \in G_{P_i} \setminus \{1\}$ such that $L_{\sigma_i}=L_Z$.    
\item[(c)] There exist exactly two lines $\ell$ such that $\ell \ni P_1$, $\ell \ne L_Z$ and $\ell$ is the tangent line at two points in $C \cap \ell$. 
Conversely, if $\ell$ is such a line, then there exists $\tau \in G_{P_1} \setminus \langle \sigma_1 \rangle$ such that $L_{\tau}=\ell$. 
\item[(d)] There exist exactly four points $Q_1, Q_2, Q_3, Q_4 \in \Bbb P^2 \setminus L_{Z}$ such that the line $\overline{Q_iQ_j}$ which passes through $Q_i, Q_j$ is a tangent line of $C$ for each $i, j$ with $i \ne j$ and $\overline{Q_iQ_j} \ni P_k$ for some $k$. 
Such points are $(0:0:1)$, $(1:0:1)$, $(0:1:1)$ and $(1:1:1)$. 
\end{itemize}
\end{proposition}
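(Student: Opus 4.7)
My plan is to treat the four parts in turn, exploiting the identification of $G_{P_i}$ with a Klein four-group acting by explicit elements of $\mathrm{PGL}(3,K)$, together with the fact that in characteristic $2$ the defining polynomial restricts to a perfect square on each bitangent line.

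For (a), I would simply quote the classification of outer Galois points from \cite[Sections 3 and 4]{fukasawa3}, the reference already cited in the statement. For (b), the strategy is to compute $G_{P_1}$ explicitly: writing $u=X^2+XZ$ and $v=Y^2+YZ$, the projection $(X:Y:Z)\mapsto(Y:Z)$ factors through the Artin--Schreier-type quadratic relation $u^2+uv+v^2+\lambda Z^4=0$ together with $X^2+XZ=u$, and the four lifts to $\mathrm{PGL}(3,K)$ are the identity, $\sigma_1\colon(X:Y:Z)\mapsto(X+Z:Y:Z)$, $\tau\colon(X:Y:Z)\mapsto(X+Y:Y:Z)$, and $\sigma_1\tau$. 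A short matrix computation identifies the three fixed lines as $L_Z$, $\{Y=0\}$, and $\{Y+Z=0\}$; hence $\sigma_1$ is the unique involution with $L_{\sigma_1}=L_Z$. The cases $P_2,P_3$ follow by the symmetry permuting $\{P_1,P_2,P_3\}$.

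Part (c) rests on a dictionary between bitangents through $P_1$ and nonidentity involutions in $G_{P_1}$. In one direction, if $\mu\in G_{P_1}\setminus\{1\}$, then $\mu$ fixes $L_\mu$ pointwise, so fixes the divisor $L_\mu\cap C$ of degree $4$; since the triple intersection of the three fixed lines reduces to $\{P_1\}$, which is not on $C$, no point of $C$ is fixed by all of $G_{P_1}$, and the Galois-theoretic fiber over $\pi_{P_1}(L_\mu)$ must therefore split as two distinct points of ramification index $2$, so $L_\mu$ is a bitangent. Conversely, if $\ell\ni P_1$ is a bitangent meeting $C$ at $R_1,R_2$ each with multiplicity $2$, then, since $G_{P_1}$ is abelian, $R_1$ and $R_2$ share a common stabilizer, which must be an involution $\mu$ with $\{R_1,R_2\}\subset L_\mu$; hence $\ell=L_\mu$. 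Thus the three involutions of $G_{P_1}$ yield exactly three bitangents through $P_1$, and discarding $L_Z=L_{\sigma_1}$ leaves the two lines $L_\tau,L_{\sigma_1\tau}$; substituting $Y=0$ or $Y+Z=0$ into the defining polynomial yields $(X^2+XZ+\sqrt{\lambda}\,Z^2)^2$, confirming bitangency directly.

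Part (d) is a direct check: the six pairwise joins of $(0:0:1),(1:0:1),(0:1:1),(1:1:1)$ are $Y=0$, $X=0$, $X=Y$, $X+Y+Z=0$, $X=Z$, $Y=Z$; each passes through exactly one of $P_1,P_2,P_3$, and each substitution reduces the defining polynomial to a perfect fourth power in the remaining two variables, confirming bitangency. Exhaustiveness follows from (c) applied at each $P_i$: there are exactly six bitangents passing through some $P_k$ (two per $P_i$), and the four listed points are their pairwise intersections in $\mathbb{P}^2\setminus L_Z$. The main obstacle in the whole proposition is part (a); rather than redoing that classification, I rely on \cite{fukasawa3}.
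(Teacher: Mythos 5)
Your proposal is correct and, for parts (a) and (b), coincides with the paper's proof: the paper likewise cites \cite[Section 4]{fukasawa3} for (a) and (d), and for (b) writes down the same four linear transformations $1,\sigma,\tau,\sigma\tau$ generating $G_{P_1}$ and reads off the fixed lines $L_Z$, $\{Y=0\}$, $\{Y+Z=0\}$. Where you genuinely diverge is in (c) and (d): the paper disposes of (c) with a one-line appeal to the ramification theory of Galois coverings (\cite[III.8.2]{stichtenoth}) and of (d) with a citation, whereas you make the involution--bitangent dictionary explicit (stabilizers in the abelian group $G_{P_1}$ are constant along fibers; the common fixed locus of $G_{P_1}$ is $\{P_1\}\notin C$, so no total ramification occurs) and verify (d) by listing the six joins and checking bitangency by substitution. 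This buys a self-contained argument, and your observation that total ramification is excluded is exactly what is needed to rule out $4$-fold tangent lines through the $P_k$, a point the statement of (d) (which says ``tangent line,'' not ``bitangent'') implicitly requires. One slip to correct: in (d) you say each substitution reduces the defining polynomial to a ``perfect fourth power''; it reduces to a perfect \emph{square} of a separable quadratic, e.g.\ $(X^2+XZ+\sqrt{\lambda}\,Z^2)^2$ on $Y=0$, exactly as you yourself computed in (c). A fourth power would mean a single point of contact of multiplicity $4$, contradicting bitangency, so the phrase should be amended, though the intended computation and conclusion are right.
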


\begin{proof} 
For (a)(d), see \cite[Section 4]{fukasawa3}. 
For the sake of readers, we explain (b)(c) for $i=1$.   
Let $\sigma, \tau$ be linear transformations given by 
$$\sigma(X:Y:Z)=(X+Z:Y:Z), \ \tau(X:Y:Z)=(X+Y:Y:Z). $$ 
Then, $G_{P_1}=\{1, \sigma, \tau, \sigma\tau\}$. 
Since $\sigma|_{L_Z}=1$ and $\tau|_{L_Z} \ne 1$, we have (b). 
Note that the line $L_{\tau}$ is given by $Y=0$ and the line $L_{\sigma\tau}$ is given by $Y+Z=0$. 
Referring \cite[III. 8.2]{stichtenoth}, we have (c). 
\end{proof}

First we prove the following. 

\begin{lemma}
Let $X=\{Q_1, Q_2, Q_3, Q_4\}$ and let $S(X)$ be the group of all permutations on $X$. 
Then, there exists an injection ${\rm Aut}(C) \hookrightarrow S(X) \cong S_4$. 
\end{lemma}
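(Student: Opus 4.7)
The plan is to use the geometric characterization of $X$ given in Proposition \ref{fundamental2}(d) to produce a natural action of ${\rm Aut}(C)$ on $X$, and then to show that the resulting homomorphism to $S(X)$ is injective because the four points $Q_1, Q_2, Q_3, Q_4$ form a projective frame of $\Bbb P^2$.

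First I would check that every $\gamma \in {\rm Aut}(C)$ sends $X$ to $X$. Since $\gamma$ is the restriction of an element of ${\rm PGL}(3, K)$, it permutes tangent lines of $C$. Linear conjugation also preserves the Galois property of a projection, so $\gamma$ permutes the set of Galois points in $\Bbb P^2 \setminus C$; by Proposition \ref{fundamental2}(a) this set equals $\{P_1, P_2, P_3\} = L_Z(\Bbb F_2)$, and in particular $\gamma(L_Z) = L_Z$. The description of $\{Q_1, Q_2, Q_3, Q_4\}$ in Proposition \ref{fundamental2}(d) refers only to $C$, to $L_Z$, to the $P_k$, and to tangent lines of $C$, each of which $\gamma$ preserves, so $\gamma(X) = X$. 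The restriction map $\gamma \mapsto \gamma|_X$ is then a group homomorphism $\Phi: {\rm Aut}(C) \to S(X)$.

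For injectivity, I would take $\gamma \in \ker \Phi$, so $\gamma(Q_i) = Q_i$ for $i = 1, 2, 3, 4$. The points $(0:0:1)$, $(1:0:1)$, $(0:1:1)$, $(1:1:1)$ are in general position in $\Bbb P^2$: no three lie on a common line. They therefore form a projective frame, and the only element of ${\rm PGL}(3, K)$ fixing such a frame pointwise is the identity. Hence $\gamma$ is trivial, and $\Phi$ is the desired injection ${\rm Aut}(C) \hookrightarrow S(X) \cong S_4$.

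I do not anticipate a serious obstacle. The substantive ingredient is Proposition \ref{fundamental2}(d), which supplies the intrinsic description of $X$; once that description is in hand, the two steps above --- invariance of $X$ under ${\rm Aut}(C)$ and the projective-frame argument for injectivity --- are essentially formal. The only routine verification is that the four listed points are in general position, which follows by writing down the six lines they pairwise span and observing that each such line contains exactly two of the $Q_i$.
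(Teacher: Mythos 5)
Your proposal is correct and follows essentially the same route as the paper: use Proposition \ref{fundamental2}(d) to get a well-defined action on $X$, then observe that fixing the four $Q_i$ forces the identity. The only cosmetic difference is in the last step, where the paper notes that $X \cup \{P_1,P_2,P_3\} = \Bbb P^2(\Bbb F_2)$ and concludes from fixing all seven points, whereas you invoke directly that the four $Q_i$ are in general position and hence form a projective frame; both observations are immediate and equivalent in effect.
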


\begin{proof}
By Proposition \ref{fundamental2}(d), we have a well-defined homomorphism ${\rm Aut}(C) \rightarrow S(X)$ by $\gamma \mapsto \gamma|_X$. 
If $\gamma \in {\rm Aut}(C)$ fixes $Q_1, Q_2, Q_3, Q_4$, then $\gamma$ fixes $P_1, P_2, P_3$ also. 
Note that $X \cup \{P_1, P_2, P_3\}=\Bbb P^2(\Bbb F_2)$. 
Then, $\gamma$ is identity on the projective plane. 
\end{proof}

We prove that $|{\rm Aut}(C)| \ge 24$. 
Let $H:=\langle \sigma_1, \sigma_2 \rangle$.

\begin{lemma}
The restriction map 
$$r : {\rm Aut}(C) \rightarrow {\rm PGL}(L_Z(\Bbb F_2)) \cong S_3; \ \gamma \mapsto \gamma|_{L_Z} $$ 
is surjective and its kernel coincides with $H$. 
In particular, $|{\rm Aut}(C)| \ge 24$. 
\end{lemma}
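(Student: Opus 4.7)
The plan is to verify three things about the restriction map $r$: that $H \subseteq \ker(r)$, that $r$ is surjective, and that $\ker(r) \subseteq H$. The bound $|\mathrm{Aut}(C)| \geq 24$ then drops out of $|\mathrm{Aut}(C)| = |\mathrm{im}(r)| \cdot |\ker(r)| = 6 \cdot 4$.

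First I would verify well-definedness and the inclusion $H \subseteq \ker(r)$. By Proposition \ref{fundamental2}(a), $\{P_1, P_2, P_3\}$ is exactly the set of Galois points off $C$, so every $\gamma \in \mathrm{Aut}(C)$ permutes this set and hence preserves the unique line $L_Z$ they span. The containment $H \subseteq \ker(r)$ is then immediate from Proposition \ref{fundamental2}(b). Moreover, writing $\sigma_1(X:Y:Z) = (X+Z:Y:Z)$ and $\sigma_2(X:Y:Z) = (X+Z:Y+Z:Z)$ as transvections with common axis $L_Z$, one checks $\sigma_1\sigma_2 = (X:Y+Z:Z) = \sigma_3$, so $H = \{1, \sigma_1, \sigma_2, \sigma_3\}$ is elementary abelian of order four.

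For surjectivity, I would apply Proposition \ref{fundamental2}(c) to each $P_i$ in turn (the proof given there extends to $i = 2, 3$ by the same argument): it produces $\tau_i \in G_{P_i}$ with fixed line $\ell_i \ne L_Z$. Then $\tau_i$ preserves $L_Z$, fixes $P_i$, and restricts nontrivially to $L_Z$ (otherwise $L_Z \subseteq \ell_i$ would force $\ell_i = L_Z$). Hence $r(\tau_i) \in S_3$ is the transposition of the two Galois points other than $P_i$. Any two of these transpositions generate $S_3 \cong \mathrm{PGL}(L_Z(\mathbb{F}_2))$, so $r$ is surjective.

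The last step, $\ker(r) \subseteq H$, is the most computational and is the main obstacle. Any $\gamma$ fixing $L_Z$ pointwise has projective matrix of the form $\gamma(X:Y:Z) = (X+aZ:Y+bZ:cZ)$ with $c \ne 0$. Substituting into the defining quartic of $C$ and comparing monomial coefficients pins down the parameters: the $XYZ^2$ coefficient forces $c^2 = c$, hence $c = 1$; the $X^2Z^2$ and $Y^2Z^2$ coefficients then force $a(a+1) = b(b+1) = 0$, i.e.\ $a, b \in \mathbb{F}_2$. This recovers exactly the four elements of $H$. The calculation requires careful bookkeeping of a degree-four substitution in characteristic $2$, but stays manageable thanks to Frobenius identities such as $(X+aZ)^2 = X^2 + a^2 Z^2$. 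A more conceptual alternative is to combine surjectivity with the previous lemma's bound $|\mathrm{Aut}(C)| \leq 24$ to force $|\ker(r)| = 4$ a posteriori, which then yields $\ker(r) = H$ without any explicit computation.
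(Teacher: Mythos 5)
Your proposal is correct, and its overall architecture --- well-definedness from the invariance of the set of Galois points, identification of the kernel, surjectivity onto $S_3$, and then $|{\rm Aut}(C)|=|\ker r|\cdot|{\rm im}\,r|=4\cdot 6$ --- matches the paper's. The surjectivity step is essentially the paper's argument in different clothing: the paper takes $\tau_i\in G_{P_i}\setminus\langle\sigma_i\rangle$ and notes that $\tau_1\tau_2H$ has order three while the quotient also contains an involution, whereas you note that each $r(\tau_i)$ is the transposition fixing $P_i$ and that two distinct transpositions generate $S_3$; both rest on the same input from Proposition \ref{fundamental2}(b) and the structure of $G_{P_i}$. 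Where you genuinely diverge is the kernel. The paper argues synthetically: given $\gamma$ with $\gamma|_{L_Z}=1$, it uses Proposition \ref{fundamental2}(c) to correct $\gamma$ by $\sigma_2^k$ so that the special line $L_\tau$ is preserved, then by $\sigma_1^l$ so that the corrected map fixes enough points of $L_\tau$ to be the identity on $L_\tau\cup L_Z$, hence on $\Bbb P^2$. You instead write down the general form $(X:Y:Z)\mapsto(X+aZ:Y+bZ:cZ)$ of a transformation fixing $L_Z$ pointwise and compare coefficients in the quartic; I checked that this does force $c=1$ and $a,b\in\Bbb F_2$ (the $X^2YZ$, $X^2Z^2$ and $Y^2Z^2$ coefficients suffice), recovering exactly the four maps $1$, $\sigma_1$, $\sigma_2=(X+Z:Y+Z:Z)$ and $\sigma_1\sigma_2$, so $|H|=4$ and $\ker r=H$. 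Your computation buys independence from part (c) of the proposition at the cost of bookkeeping, while the paper's argument is coordinate-free but leans on (c). Your a posteriori alternative --- combining $H\subseteq\ker r$, $|H|=4$, surjectivity, and the bound $|{\rm Aut}(C)|\le 24$ from the preceding lemma to force $\ker r=H$ --- is also valid and is arguably the cleanest of the three.
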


\begin{proof}
Let $\gamma \in {\rm Aut}(C)$. 
Since the set of Galois points is invariant under the linear transformation, $\gamma(\{P_1, P_2, P_3\})=\{P_1, P_2, P_3\}$, by Proposition \ref{fundamental2}(a). 
Therefore, $r$ is well-defined. 

We consider the kernel. 
Assume that $\gamma|_{L_{Z}}$ is identity. 
Let $\sigma_i \in G_{P_i}$ be an automorphism as in Proposition \ref{fundamental2}(b) for $i=1,2$ and let $\tau, \eta \in G_{P_1} \setminus \langle \sigma_1 \rangle$ with $\tau \ne \eta$. 
Then, $\gamma(L_{\tau})=L_{\tau}$ or $L_{\eta}$ by Proposition \ref{fundamental2}(c). 
Therefore, $\sigma_2^k\gamma(L_{\tau})=L_{\tau}$, where $k=0$ or $1$. 
Since $\sigma_1$ acts on $C \cap L_{\tau}$, $\sigma_1^l\sigma_2^k\gamma$ is identity on $L_{\tau}$ and $L_Z$, where $l=0$ or $1$. 
This implies that $\sigma_1^l\sigma_2^k\gamma$ is identity on $\Bbb P^2$. 
We have $\gamma \in H$.
 
We prove that $r$ is surjective. 
We have an injection ${\rm Aut}(C)/H \hookrightarrow S_3$. 
Let $\tau_i \in G_{P_i} \setminus \langle \sigma_i \rangle$ for each $i$. 
Since $\tau_1\tau_2(P_1)=P_2$, $\tau_1\tau_2(P_2)=P_3$ and $\tau_1\tau_2(P_3)=P_1$, the order of $\tau_1\tau_2H \in {\rm Aut}(C)/H$ is three. 
Since the group ${\rm Aut}(C)/H$ has elements of order two and three, we have ${\rm Aut}(C)/H=S_3$. 
\end{proof}

We have the conclusion, by these two lemmas. 

\begin{remark}
We also have ${\rm Aut}(C)=\langle G_{P_1}, G_{P_2}, G_{P_3}\rangle=\langle G_{P_1}, G_{P_2} \rangle$. 
\end{remark}

\
\begin{center} {\bf Acknowledgements} \end{center} 
The author was partially supported by JSPS KAKENHI Grant Number 25800002.

\end{document}